\newcommand{\nicefrac}[2]{#1/#2}
\newtheorem{lemma}{Lemma}
\newtheorem{proposition}{Proposition}
\newtheorem{example}{Example}
\newtheorem{theorem}{Theorem}
\newtheorem{corollary}{Corollary}
\newtheorem{conjecture}{Conjecture}
\newcommand{\xx}{1}
\newcommand{\yy}{1}
\newcommand{\stage}[2]{\tikz{\node[shape=circle,draw,inner sep=1pt,fill=#1]{$v_{#2}$};}} 
\newcommand{\leaf}{\tikz{\node[shape=circle,draw,inner sep=3pt,fill=white] {};}}
\newcommand\independent{\protect\mathpalette{\protect\independenT}{\perp}}
\def\independenT#1#2{\mathrel{\rlap{$#1#2$}\mkern2mu{#1#2}}}
\DeclareMathOperator{\Multi}{Multi}
\newcommand{\ptheta}{p_{\theta}}
\newcommand{\R}{\mathbb{R}}
\newcommand{\T}{\mathcal{T}}
\begin{document}

\begin{frontmatter}

% "Title of the Paper"
\title{The curved exponential family of a staged tree}
%\thankstext{t1}{This is an original survey paper}
\runtitle{The curved exponential family of a staged tree}

% indicate corresponding author with \corref{}
% \author{\fnms{John} \snm{Smith}\thanksref{t2}\corref{}\ead[label=e1]{smith@foo.com}\ead[label=e2,url]{www.foo.com}}
% \thankstext{t2}{Thanks to somebody} 
% \address{line 1\\ line 2\\ \printead{e1}\\ \printead{e2}}

\author{\fnms{Christiane} \snm{G\"{o}rgen}\ead[label=e1]{goergen@math.uni-leipzig.de}}
\address{Mathematisches Institut, Universit\"at Leipzig, Germany \\\printead{e1}}
%\and
\author{\fnms{Manuele} \snm{Leonelli}\corref{}\ead[label=e2]{manuele.leonelli@ie.edu}}
\address{School of Human Sciences and Technology, IE University, Madrid, Spain \\\printead{e2}}

\author{\fnms{Orlando} \snm{Marigliano}\ead[label=e3]{orlandom@kth.se}}
\address{KTH Royal Institute of Technology, Stockholm, Sweden \\\printead{e3}}

\runauthor{G\"{o}rgen, Leonelli and Marigliano}

\begin{abstract}
Staged tree models are a discrete generalization of Bayesian networks. We show that these form curved exponential families and derive their natural parameters, sufficient statistic, and cumulant-generating function as functions of their graphical representation. We give necessary and sufficient graphical criteria for classifying regular subfamilies and discuss implications for model selection.
\end{abstract}

\begin{keyword}[class=MSC]
\kwd[Primary ]{62H99}
%\kwd{}
\kwd[; secondary ]{68T99}
\end{keyword}

\begin{keyword}
\kwd{Bayesian Information Criterion}
\kwd{Chain Event Graph}
\kwd{Curved Exponential Family}
\kwd{Graphical Model}
\kwd{Staged Tree}
\end{keyword}

% history:
% \received{\smonth{1} \syear{0000}}

%\tableofcontents

\end{frontmatter}

% Main text entry area
\section{Introduction}

Staged trees define statistical models which can account for a variety of partial and asymmetric conditional independence statements between discrete events \citep{Collazo.etal.2018}.  The use of these graphical models in applications is constantly increasing \citep{Barclay.etal.2013a,Collazo.Smith.2015,Keeble2017} and free software for practitioners is newly available \citep{rpackage}. However, their formal properties have only recently been studied for the first time \citep{Goergen.Smith.2017}. We now extend this formal study by 1.\,proving that in general staged tree models form curved exponential families and by 2.\,expressing their natural parameters, sufficient statistic, and cumulant-generating function as a function of the underlying graphical representation. We also give a graphical criterion under which they form regular exponential families.

Because exponential families exhibit a multitude of desirable inferential properties \citep{Kass.Vos.1997}, similar characterisations for other graphical models have been studied.
For instance, undirected graphical models with no hidden variables are regular exponential families \citep{Lauritzen.1996}, Bayesian networks \citep[e.g.][]{Koller.Friedman.2009} are curved exponential families, and directed graphical models with hidden variables are stratified exponential families \citep{Geiger.etal.2001}. 
These results are critical for model selection techniques. In particular, \citet{Haughton.1988} proved that for curved exponential families the Bayesian information criterion \citep{Schwarz.1978} is an asymptotically valid rule.
To this date model selection for staged trees has usually been carried out in a Bayesian fashion by selecting the maximum a posteriori model \citep{Freeman.Smith.2011a,Barclay.etal.2013a,Cowell.Smith.2014}. Now, our results make it theoretically sound to use the Bayesian information criterion as well, and its implementation in the \texttt{R} package \texttt{stagedtrees} \citep{rpackage} is thus justified. This criterion has already been employed by \citet{Silander.etal.2013} although its asymptotic geometric validity was not assured at the time.

By expressing every staged tree model explicitly as a curved exponential family, we in particular achieve such an explicit description for every discrete Bayesian network. To the best of our knowledge, this is still missing in the literature.
We demonstrate on small-scale examples how a given Bayesian network's exponential form can be stated in staged-tree language.
We can then also specify new graphical criteria under which the model is a regular exponential family.

%Furthermore,
%in applications staged trees can describe the modelled space of events in far more detail than more widely known graphical models such as Bayesian networks \citep{Shafer.1996,Collazo.etal.2018}.

\section{Staged Trees}
\label{sec:ST}

\subsection{Probability trees as graphical statistical models}
\label{sub:probtrees}

Probability trees are highly intuitive depictions of unfoldings of discrete events \citep{Shafer.1996} and have been used in a variety of real-world applications \citep{Smith.2010,Collazo.etal.2018}.

Let $\T$ be a directed, rooted tree graph where every vertex has either no or at least two emanating edges. For simplicity, we number the inner (non-leaf) vertices $v_1,\ldots,v_k$, with $v_1$ denoting the root, and count the edges emanating from each vertex with indices $j=1,\ldots,\kappa_i$ for all $i=1,\ldots,k$.
To every edge we assign a positive probability label ${\theta_{ij}\in(0,1)}$ such that the total sum of all labels  belonging to the same vertex is always equal to one, $\sum_{j=1}^{\kappa_i}\theta_{ij}=1$. Such a labelled tree graph is called a \emph{probability tree}.

For all root-to-leaf paths $x$ of $\T$ we define $\ptheta(x)=\prod_{i=1}^k\prod_{j=1}^{\kappa_i}\theta_{ij}^{\alpha_{ij}(x)}$ where $\alpha_{ij}(x)$ is one if $x$ passes through the $j$th edge emanating from $v_i$ and zero otherwise. By the constraints on the $\theta_{ij}$, the function $p_\theta$ is a probability distribution on the set of all root-to-leaf paths. We denote the size of this set by $n$.
A \emph{probability tree model} is then the set of all such probability distributions $\ptheta$ on $n$ atoms, for varying labels $\theta$. The set of parameters $\theta$ satisfying the constraints above is the \emph{parameter space} $\Theta_{\T}$ of the model. It equals the product $\times_{i=1}^k\Delta_{\kappa_i-1}^\circ$ of the $k$ open probability simplices $\Delta_{\kappa_i-1}^\circ=\{(t_1,\ldots,t_{\kappa_i})\in\R^{\kappa_i}\mid\sum_{r=1}^{\kappa_i} t_r=1\text{ and }0<t_r<1\text{ for all }r=1,\ldots,\kappa_i\}$.
Thus, the parameter space $\Theta_{\T}$ has dimension $d=\sum_{i=1}^k(\kappa_i-1)$.

\subsection{Probability trees and conditional independence}
\label{sub:stagedtrees}

An event in a probability tree is a set of root-to-leaf paths, often specified by shared vertices or edges. Conditional independence relationships between events can be visualized using a simple type of colouring of these vertices in the following way.

A \emph{staged tree} is a probability tree together with an equivalence relation on the vertex set such that two vertices are in the same stage if and only if their outgoing edges have the same attached probabilities. \Cref{fig:staged&CEG} decpits a first example. A \emph{staged tree model} is a submodel of a probability tree model where probability simplices in the parameter space have been identified with each other according to the equivalence relation. 

\begin{figure}
\centering
\subfigure[][A staged tree for three binary random variables representing the collider Bayesian network $X\to Z\leftarrow Y$.\label{fig:colliderstaged}]{%
\scalebox{0.8}{
\begin{tikzpicture}
\renewcommand{\xx}{2}
\renewcommand{\yy}{0.7}
\node (v1) at (0*\xx,0*\yy) {\stage{white}{1}};
\node (v2) at (1*\xx,1.5*\yy) {\stage{cyan}{2}};
\node (v3) at (1*\xx,-1.5*\yy) {\stage{cyan}{3}};
\node (v4) at (2*\xx,3*\yy) {\stage{white}{4}};
\node (v5) at (2*\xx,1*\yy) {\stage{white}{5}};
\node (v6) at (2*\xx,-1*\yy) {\stage{white}{6}};
\node (v7) at (2*\xx,-3*\yy) {\stage{white}{7}};
\node (l1) at (3*\xx,3.5*\yy) {\leaf};
\node (l2) at (3*\xx,2.5*\yy) {\leaf};
\node (l3) at (3*\xx,1.5*\yy) {\leaf};
\node (l4) at (3*\xx,0.5*\yy) {\leaf};
\node (l5) at (3*\xx,-0.5*\yy) {\leaf};
\node (l6) at (3*\xx,-1.5*\yy) {\leaf};
\node (l7) at (3*\xx,-2.5*\yy) {\leaf};
\node (l8) at (3*\xx,-3.5*\yy) {\leaf};
\draw[->] (v1) -- node [above, sloped] {$X=0$} (v2);
\draw[->] (v1) -- node [below, sloped] {$X=1$} (v3);
\draw[->] (v2) --  node [above, sloped] {$Y=0$} (v4);
\draw[->] (v2) -- node [below, sloped] {$Y=1$} (v5);
\draw[->] (v3) -- node [above, sloped] {$Y=0$} (v6);
\draw[->] (v3) -- node [below, sloped] {$Y=1$} (v7);
\draw[->] (v4) -- node [above, sloped] {$Z=0$} (l1);
\draw[->] (v4) -- node [below, sloped] {$Z=1$} (l2);
\draw[->] (v5) -- node [above, sloped] {$Z=0$} (l3);
\draw[->] (v5) -- node [below, sloped] {$Z=1$} (l4);
\draw[->] (v6) -- node [above, sloped] {$Z=0$} (l5);
\draw[->] (v6) -- node [below, sloped] {$Z=1$} (l6);
\draw[->] (v7) -- node [above, sloped] {$Z=0$} (l7);
\draw[->] (v7) -- node [below, sloped] {$Z=1$} (l8);
\end{tikzpicture}}%
}
\qquad
\subfigure[][A staged tree representing the conditional independence relation $X\independent Y$ and ${Z\independent X\mid Y}$, and the implied relation $X\independent Z$.
\label{fig:staged}]{
\scalebox{0.8}{
\begin{tikzpicture}
\renewcommand{\xx}{2}
\renewcommand{\yy}{0.7}
\node (v1) at (0*\xx,0*\yy) {\stage{white}{1}};
\node (v2) at (1*\xx,1.5*\yy) {\stage{cyan}{2}};
\node (v3) at (1*\xx,-1.5*\yy) {\stage{cyan}{3}};
\node (v4) at (2*\xx,3*\yy) {\stage{red!70}{4}};
\node (v5) at (2*\xx,1*\yy) {\stage{green!50}{5}};
\node (v6) at (2*\xx,-1*\yy) {\stage{red!70}{6}};
\node (v7) at (2*\xx,-3*\yy) {\stage{green!50}{7}};
\node (l1) at (3*\xx,3.5*\yy) {\leaf};
\node (l2) at (3*\xx,2.5*\yy) {\leaf};
\node (l3) at (3*\xx,1.5*\yy) {\leaf};
\node (l4) at (3*\xx,0.5*\yy) {\leaf};
\node (l5) at (3*\xx,-0.5*\yy) {\leaf};
\node (l6) at (3*\xx,-1.5*\yy) {\leaf};
\node (l7) at (3*\xx,-2.5*\yy) {\leaf};
\node (l8) at (3*\xx,-3.5*\yy) {\leaf};
\draw[->] (v1) -- node [above, sloped] {$X=0$} (v2);
\draw[->] (v1) -- node [below, sloped] {$X=1$} (v3);
\draw[->] (v2) --  node [above, sloped] {$Y=0$} (v4);
\draw[->] (v2) -- node [below, sloped] {$Y=1$} (v5);
\draw[->] (v3) -- node [above, sloped] {$Y=0$} (v6);
\draw[->] (v3) -- node [below, sloped] {$Y=1$} (v7);
\draw[->] (v4) -- node [above, sloped] {$Z=0$} (l1);
\draw[->] (v4) -- node [below, sloped] {$Z=1$} (l2);
\draw[->] (v5) -- node [above, sloped] {$Z=0$} (l3);
\draw[->] (v5) -- node [below, sloped] {$Z=1$} (l4);
\draw[->] (v6) -- node [above, sloped] {$Z=0$} (l5);
\draw[->] (v6) -- node [below, sloped] {$Z=1$} (l6);
\draw[->] (v7) -- node [above, sloped] {$Z=0$} (l7);
\draw[->] (v7) -- node [below, sloped] {$Z=1$} (l8);
\end{tikzpicture}}%
}
\caption{Two discrete graphical models. Vertices which are in the same stage have been assigned the same colour. In this picture, white-coloured vertices are not in the same stage with any other vertex.}\label{fig:staged&CEG}
\end{figure}
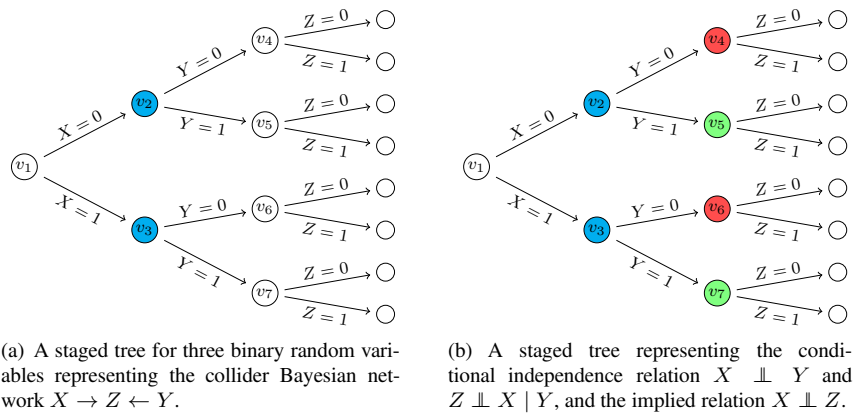

For example, if a tree depicts the product state space of a vector of discrete random variables $Y_i$, then every inner vertex represents a
%conditional
random variable ${Y_i\mid Y_{[i-1]}=y_{[i-1]}}$
%is associated to a random variable 
conditional on specific values taken by its ancestors, every edge corresponds to a state $y_i$ of that variable, and every edge can be labelled by the
%respective
conditional probability ${P(Y_i = y_i \mid Y_{[i-1]}=y_{[i-1]})}$
of the random variable being in that state given the particular ancestor configuration. Here, $[i-1]$ is the set of indices $\{1,\dotsc, i-1\}$.
The stage relations on the tree can then be used to model conditional independence relations such as ${P(Y_i=y_i\mid y_{[i-1]}) = P(Y_i=y_i\mid y'_{[i-1]})}.$

To illustrate this, in \cref{fig:colliderstaged} the upward edge emanating from $v_2$ can be labeled ${P(Y=0\mid X=0)}$ and the upward edge from $v_4$ as ${P(Z=0\mid X=0,Y=0)}$.
Here, the vertices $v_2$ and $v_3$ are in the same stage. This models the independence $X\independent Y$ via the equalities ${P(Y=y\mid X=0) = P(Y=y\mid X=1)}$ for $y=0,1$. Similarly in \cref{fig:staged}, the 
additional stages together
entail $Z\independent X\mid Y$.

%The staging $\{v_2,v_3\}$ in \cref{fig:colliderstaged} represents the independence , whilst the staging $\{v_4,v_6\}$ and $\{v_5,v_7\}$  in \cref{fig:staged} entails $Z\independent X ~|~Y$ since $\operatorname{P}(Z=z~|~Y=0,X=0)=\operatorname{P}(Z=z~|~Y=0,X=1)$ and $\operatorname{P}(Z=z~|~Y=1,X=0)=\operatorname{P}(Z=z~|~Y=1,X=1)$, $z=0,1$.

Thus, staged tree models include as a special case both discrete Bayesian networks  and context-specific discrete Bayesian networks, modelling conditional independence relations that hold only for a subset of the states of a discrete random variable \citep{Boutilier.etal.1996}. In particular, all conditional independence relations given by the Bayesian network can be reflected in the staging as described above. \citet{Varando2021} give an algorithm that for any Bayesian network $G$ outputs a staged tree $\T_G$ representing the same model.

%\textcolor{red}{Let $G$ be the DAG of a Bayesian network. The ordered Markov property implies the independence between each variable and its predecessors given its parents. The staged tree whose vertex staging represents such independences is called \emph{the staged tree model of G} and denoted as $\mathcal{T}_G$. \citet{Varando2021} fully formalizes the relationship between $G$ and $\mathcal{T}_G$ and introduces an algorithm to convert $G$ into $\mathcal{T}_G$. As an illustration, consider random variables $X$, $Y$ and $Z$, and a DAG over these variables whose edge set only includes $(Y,Z)$. The ordered Markov property implies $X\independent Y$ and $Z\independent X ~|~ Y$. Then the staged tree model of $G$, $\mathcal{T}_G$, is the one in \cref{fig:staged}.}

Because probability trees and staged trees may have root-to-leaf paths of different lengths and because there are no constraints on where stages can be imposed, the class of staged tree models is much wider than the one of discrete Bayesian networks.
Working within this class unlocks an immediate graphical representation of modeling assumptions such as the space of events, the intricate conditional independence relationships within this space, and the local sum-to-one conditions.  
Because staged trees grow quickly even for small problems, whenever they exhibit many symmetries in their subtrees they are sometimes represented more compactly by alternative graphical depictions called \emph{chain event graphs}: see the textbook by \citet{Collazo.etal.2018} for an in-depth discussion of these points.

\section{Staged trees are curved exponential families}
\label{sec:main}

\subsection{Exponential families}
\label{sub:expfam}

Following \citet{Kass.Vos.1997}, a parametric statistical model 
$\{\ptheta\mid\theta\in\Theta\}$ 
on a space $\mathcal{X}$ is called an \emph{exponential family} if every distribution in the model can be written in the form
\begin{equation}\label{eq:expfam}
\ptheta(x)=h(x)\exp\Bigl(\eta(\theta)^\top T(x)-\psi(\theta)\Bigr) \qquad\text{for all }x\in\mathcal{X}
\end{equation}
where $\eta:\Theta\to\R^d$ is the canonical parameter, $T:\mathcal{X}\to\R^d$ is a minimally sufficient statistic, $^\top$ denotes the transpose operation, and ${\psi:\Theta\to\R}$ is the cumulant-generating function. The space $\mathcal{N}=\{{\eta(\theta)\in\R^d}\mid\text{\cref{eq:expfam} is integrable}\}$ is called the \emph{natural parameter space}. If $\mathcal{N}$ is an open and non-empty subset of $\R^d$ then the model is called a \emph{regular} exponential family of dimension $d$.
  
For exponential families, moments of all orders exist and the maximum likelihood estimator exists and is unique. Regular exponential families are closed under linear contraints on the natural parameter space and the log-likelihood function on that space is concave. Under more general constraints on the parameters, these families are more technical to study but can often retain many useful asymptotic properties. The two best-studied generalizations of regular exponential families are so-called curved and, more generally, stratified exponential families. The development of this paper requires only the former type of models which we formally introduce in the third subsection below.

\subsection{Regular exponential families}
\label{sub:regular}

In a first step, we focus on probability trees without imposing a stage structure. In this context, the probability tree whose graph is a root vertex connected to its leaves exclusively via single edges is a \emph{star} as in the graph-theoretic sense.

\begin{lemma}\label{lem:saturated}
Every probability tree model on $n$ atoms is equal to the full probability simplex $\Delta_{n-1}^\circ$.

In particular, the star with $n$ leaves and with probability $\theta_{1r}$ attached to edge $r$, for $r=1,\ldots,n$, represents the Multinomial distribution $\Multi(1,\theta)$ with one trial and parameters $\theta=(\theta_{11},\ldots,\theta_{1n})$.
\end{lemma}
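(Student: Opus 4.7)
The plan is to prove the two inclusions $\subseteq$ and $\supseteq$ between the probability tree model and the open simplex $\Delta_{n-1}^\circ$, and then to specialize to the star.

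For $\subseteq$, the argument is immediate from the definition. Given any $\theta\in\Theta_\T$, each $\ptheta(x)$ is a product of factors in $(0,1)$, hence strictly positive, and by the discussion after the definition of $\ptheta$ in \cref{sub:probtrees} the values sum to one over all root-to-leaf paths. Thus $\ptheta\in\Delta_{n-1}^\circ$.

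For $\supseteq$, I would construct a preimage of an arbitrary $q\in\Delta_{n-1}^\circ$ explicitly. For each inner vertex $v_i$ let $L(v_i)$ denote the set of root-to-leaf paths passing through $v_i$, and for each edge $j$ emanating from $v_i$ let $L(v_i,j)$ denote the subset of those paths that use edge $j$. Setting
\begin{equation*}
\theta_{ij}=\frac{\sum_{x\in L(v_i,j)}q(x)}{\sum_{x\in L(v_i)}q(x)}
\end{equation*}
defines positive numbers that sum to one at each vertex, so $\theta\in\Theta_\T$. A telescoping argument along the unique root-to-leaf path leading to $x$ then shows $\ptheta(x)=q(x)$: the denominator at vertex $v_i$ cancels against the numerator at the parent vertex, and the final numerator at the last inner vertex reduces to $q(x)$ because the corresponding $L(v_i,j)$ is the singleton $\{x\}$. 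This proves equality of sets.

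The Multinomial claim then follows by inspection of the star. It has a single inner vertex $v_1$ with $\kappa_1=n$ emanating edges, one to each leaf, so every root-to-leaf path $x_r$ consists of exactly one edge and $\ptheta(x_r)=\theta_{1r}$. This is the probability mass function of $\Multi(1,\theta)$. The main obstacle I anticipate is purely bookkeeping in the telescoping step, ensuring that the denominator $\sum_{x\in L(v_i)}q(x)$ is nonzero so that the construction of $\theta_{ij}$ makes sense; this is guaranteed because $q$ lies in the \emph{open} simplex, so every leaf below $v_i$ contributes a strictly positive value.
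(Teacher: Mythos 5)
Your proposal is correct and follows essentially the same route as the paper: the forward inclusion by positivity and normalization, the reverse inclusion by defining $\theta_{ij}$ as the ratio of path-probability sums at each vertex and telescoping along a root-to-leaf path, and the star case by direct inspection. No gaps.
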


\begin{proof}
Every probability tree with $n$ root-to-leaf paths specifies a set of distributions inside the open $n-1$-dimensional probability simplex as outlined in \cref{sub:probtrees}. Conversely, given a fixed tree graph with $n$ leaves, every point $(p_1,\ldots,p_n)$ inside the simplex can be interpreted as a vector whose $r$th component $p_r$ is the probability of going down the $r$th root-to-leaf path in the tree, $r=1,\ldots,n$. We can pick the label of the $j$th edge out of vertex $v_i$ to be the fraction $\nicefrac{\sum_{b\in[ij]}p_b}{\sum_{a\in[i]}p_a}$ where $[i],[ij]\subseteq\{1,\ldots,n\}$ denote the indices of all root-to-leaf paths passing through vertex $v_i$ or through the tail of its $j$th outgoing edge, respectively. These labels are conditional probabilities and their product along a root-to-leaf path is equal precisely to the atomic probability of that path. This proves the first claim.

As for the second claim, $\ptheta(x)=\prod_{r=1}^n\theta_{1r}^{\alpha_{1r}(x)}$ where $\sum_{r=1}^n\theta_{1r}=1$ is the probability distribution induced by the star with edges numbered $r=1,\ldots,n$. This is the stated Multinomial distribution.
\end{proof}

\citet{Wishart.1949}\label{wish} derives the exponential form \cref{eq:expfam} for the Multinomial distribution $\Multi(1,\theta)$ as follows: $h\equiv1$ is constant, the natural parameters are normalized log-probabilities $\eta_r(\theta)=\log(\nicefrac{\theta_{1r}}{(1-\sum_{s=1}^{n-1}\theta_{1s})})$ for $r=1,\ldots,n$, the sufficient statistic is the vector of the first $n-1$ edge indicators $T=(\alpha_{11},\ldots,\alpha_{1,n-1})$, and the cumulant-generating function is the logarithm of the normalizing constant $\psi(\theta)={\log(1-\sum_{s=1}^{n-1}\theta_{1s})}$.

By \cref{lem:saturated}, all probability trees on the same number of root-to-leaf paths are statistically equivalent in the sense that they all represent the same model, namely the full probability simplex. Thus, \emph{any} probability tree is a graphical representation of the Multinomial distribution, and Wishart's result gives a sufficient statistic, cumulant-generating function, and the natural parameters for any probability tree, possibly after reparametrization.

In Proposition~1, we give a parametrization of the exponential family of a probability tree model which is alternative to Wishart's. This new parametrization has the advantage of respecting the structure of trees having root-to-leaf paths longer than single edges.
%
%The motivation for retaining this extra structure, even if the model could be in theory collapsed to a star,
%is threefold. First and foremost, doing this
%is that doing this
%
%Even if the model could be collapsed to a star, 
Retaining this extra structure
allows us to generalize the exponential-family parametrization of probability trees to the case of staged trees in Section~\ref{sub:curved}.
%And third, this extra graphical structure opens the door to employ techniques from algebraic geometry for the better understanding of the properties of these models \citep[and references therein]{Duarte.Goergen.2019,Goergen.etal.2018}. \textcolor{red}{Rethink this paragraph next time. Maybe delete the last sentence?}

We thus derive the following result. Here for any probability tree $\T$ we say that the $\kappa_i$th edge emanating from vertex $v_i$ points \emph{downwards}, $i=1,\ldots,k$. We then recursively define functions $N_i:\Theta_{\T}\to\R$ as $N_i(\theta)=1$ for leaf vertices $v_i$ and else as a product of labels pointing downwards $N_i(\theta)={(1-\sum_{s=1}^{\kappa_i-1}\theta_{is})N_{i\kappa_i}(\theta)}$ for $i=1,\ldots,k$. The shorthand $N_{ij}$ denotes $N_r$ for the vertex $v_r$ which is the tail of the $j$th edge coming out of vertex $v_i$. \Cref{fig:natural-parameters} illustrates this notation.

\begin{figure}
\centering
\begin{tikzpicture}
\renewcommand{\xx}{2.8}
\renewcommand{\yy}{0.7}
\newcommand{\vertx}[1]{\tikz{\node[shape=circle,draw,inner sep=1.5pt]{$v_{#1}$};}} 
\newcommand{\vertxx}[1]{\tikz{\node[shape=circle,draw,inner sep=0.5pt]{$v_{#1}$};}} 
\newcommand{\vertxxx}[1]{\tikz{\node[shape=circle,draw,inner sep=-0.5pt]{$v_{#1}$};}} 
\newcommand{\somevertx}{\tikz{\node[shape=circle,draw,inner sep=2.5pt]{};}} 

\node (v1) at (0.2*\xx,0*\yy) {\vertx{1}};
\node (vi) at (1*\xx,0*\yy) {\vertx{i}};
\node (vij) at (2*\xx,1*\yy) {\vertxx{ij}};
\node (vijd1) at (3*\xx,0*\yy) {\somevertx};
\node (vijd2) at (3.5*\xx,-0.5*\yy) {\somevertx};
\node (vijd3) at (4*\xx,-1*\yy) {\somevertx};
\node (vik) at (2*\xx,-1*\yy) {\vertxxx{i\kappa_i}};
\node (vikd1) at (3*\xx,-2*\yy) {\somevertx};
\node (vikd2) at (3.5*\xx,-2.5*\yy) {\somevertx};
\node (vikd3) at (4*\xx,-3*\yy) {\somevertx};

\draw[->,dashed] (v1) -- (vi);

\draw[->,very thick] (vi) -- node [pos=0.6,fill=white] {$\theta_{ij}$} (vij);
\draw[->,dashed] (vi) -- (1.8*\xx,1.5*\yy);
\draw[->,dashed] (vi) -- (1.9*\xx,0.3*\yy);
\draw[->,very thick] (vi) -- node [below] {$1-\sum_{s=1}^{\kappa_i-1}\theta_{is}$\hspace*{1cm}} node [above,pos=0.5] {$\boldsymbol{N_i}$} (vik);

%\draw[->,dashed] (vij) -- (2.6*\xx,1.6*\yy);
\draw[->,very thick] (vij) -- node [above,pos=0.3] {$\boldsymbol{N_{ij}}$} (vijd1);
\draw[->,dashed,very thick] (vijd1) -- (vijd2);
\draw[->,very thick] (vijd2) -- (vijd3);

%\draw[->,dashed] (vik) -- (2.8*\xx,-0.85*\yy);
\draw[->,very thick] (vik) --node [below] {$1-\sum_{t=1}^{\kappa_{\cdot}-1}\theta_{\cdot t}$\hspace*{1cm}} (vikd1);
\draw[->,dashed,very thick] (vikd1) -- (vikd2);
\draw[->,very thick] (vikd2) --node [pos=0.3,below=4pt] {\ldots} (vikd3);
\end{tikzpicture}
\caption{Illustration of the notation used in \cref{prop:probtrees}. The natural parameter $\eta_{ij}$ belonging to the indicator $\alpha_{ij}$ of \enquote{passing through the edge $(v_i,v_{ij})$} is a function of the label $\theta_{ij}$ of that edge and of the products $N_i$ and $N_{ij}$ of the labels of the downwards pointing $v_i$- and $v_{ij}$-to-leaf paths, respectively.
The edges involved in the definition of $\eta_{ij}$ are depicted in bold.
\label{fig:natural-parameters}}
\end{figure}

\begin{proposition}\label{prop:probtrees}
Every probability tree $\mathcal T$ with parameters $\theta$ represents a regular exponential family with the following attributes:
\begin{itemize}
\item the indicators $T_{ij}=\alpha_{ij}$ of the first $j=1,\ldots,\kappa_i-1$ edges of all inner vertices $i=1,\ldots,k$ are a sufficient statistic,
\item the natural parameters $\eta_{ij}$ are locally normalized log-probabilities defined by $\eta_{ij}(\theta)=\log(\theta_{ij}\nicefrac{N_{ij}(\theta)}{N_i(\theta)})$ for all $j=1,\ldots,\kappa_i-1$ and $i=1,\ldots,k$, and the natural parameter space is $\R^{d}$ with $d=\sum_{i=1}^k(\kappa_i-1)$, and
\item the cumulant-generating function is the negative log-sum of normalizing constants along the root-to-leaf path whose edges all point downwards, $\psi(\theta)=-\log(N_1(\theta))$.
\end{itemize}
\end{proposition}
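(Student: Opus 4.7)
The plan is to prove the central algebraic identity
\[
\ptheta(x) = N_1(\theta) \prod_{(i,j):\, j<\kappa_i} \Bigl(\theta_{ij}\, N_{ij}(\theta)/N_i(\theta)\Bigr)^{\alpha_{ij}(x)},
\]
from which all three ingredients of the exponential form can be read off after taking logarithms. I would establish this by a telescoping argument along the path $x$. Label the vertices of $x$ as $u_0=v_1, u_1,\ldots,u_L$ with $u_L$ a leaf, and the edges as $e_1,\ldots,e_L$ with labels $\theta_{e_l}$. For each step, define the relative factor $r_l := \theta_{e_l}\, N(u_l)/N(u_{l-1})$. When $e_l$ is the downward edge at $u_{l-1}=v_i$, the recursive definition $N_i = \theta_{i\kappa_i}\, N_{i\kappa_i}$ forces $r_l=1$; otherwise $e_l$ corresponds to some $(i,j)$ with $j<\kappa_i$ and $r_l = \exp(\eta_{ij}(\theta))$. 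Multiplying over $l$ causes the $N$-ratios to telescope to $N(u_L)/N(u_0)=1/N_1(\theta)$ while the $\theta_{e_l}$ multiply to $\ptheta(x)$, yielding the identity. Taking logarithms then reads off $h\equiv 1$, $T_{ij}=\alpha_{ij}$, $\eta_{ij}(\theta)=\log(\theta_{ij} N_{ij}/N_i)$, and $\psi(\theta)=-\log N_1(\theta)$.

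Next I would show that the map $\eta:\Theta_{\T}\to\R^d$ is a bijection onto all of $\R^d$. Surjectivity is the substantive part: for any candidate $\eta\in\R^d$, normalising $\exp(\eta^\top T(x))$ over the finite sample space defines a positive distribution on the leaves, which by \cref{lem:saturated} equals $p_{\theta'}$ for a unique $\theta'\in\Theta_\T$. Applying the identity above to $\theta'$ and equating with this distribution forces $\eta(\theta')^\top T(x) - \psi(\theta') = \eta^\top T(x) - \log Z$ for every $x$. Evaluating at the all-downward path $x_0$ (where $T(x_0)=0$) pins down the normalising constant, and then linear independence of the $T_{ij}$ gives $\eta(\theta')=\eta$; injectivity is similar. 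Since $\R^d$ is open, the family is regular, and the count $d=\sum_i(\kappa_i-1)=n-1$ matches the dimension of the full open simplex.

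To close the argument, I would verify that the sufficient statistic is minimal, i.e.\ that the constant $1$ together with the $\alpha_{ij}$ for $j<\kappa_i$ are linearly independent as functions on the $n$-element path set. I would argue by induction on tree depth: for each pair $(i,j)$ with $j<\kappa_i$, the root-to-leaf path obtained by following the non-downward edge $(v_i, v_{ij})$ and then only downward edges throughout the subtree rooted at $v_{ij}$ has non-downward edges equal exactly to $(i,j)$ together with those already on the path from the root to $v_i$. Evaluating a proposed linear relation at such paths in order of increasing depth yields a triangular system, forcing all coefficients to vanish.

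The main obstacle in my view is the telescoping identity, since one must handle both the \enquote{pass through, go downward} and \enquote{pass through, go non-downward} cases uniformly at every inner vertex on the path. The key insight is that the downward choice is exactly the one that makes the relative factor $r_l$ equal to $1$, so it contributes nothing to the product and matches the convention that $T_{ij}$ is indexed only by $j<\kappa_i$. Once this identity is in place, the regularity and minimality claims are routine consequences of \cref{lem:saturated} and of the combinatorics of the tree.
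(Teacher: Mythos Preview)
Your proof is correct and arrives at the same exponential-form identity as the paper, but the route to that identity is organised differently. The paper rewrites $\ptheta(x)$ by separating the last-edge contributions $\theta_{i\kappa_i}^{\alpha_i(x)(1-\sum_j\alpha_{ij}(x))}$ and then proves the residual claim \cref{eq:claim} by induction on the number $k$ of inner vertices, peeling off the root at each step and distinguishing whether the first edge of $x$ is downward or not. Your telescoping argument instead walks along the fixed path $x$ edge by edge, using that the recursive definition $N_i=\theta_{i\kappa_i}N_{i\kappa_i}$ forces the ratio $r_l$ to collapse to $1$ exactly at downward steps. This is more direct and avoids the two-case induction; it also makes transparent why the downward edges disappear from the exponent and why $N_1$ emerges as the normaliser. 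For regularity the two arguments coincide in spirit: both invert $\eta$ by normalising $\exp(\eta^\top T(\cdot))$ and invoking \cref{lem:saturated} to recover~$\theta$. You go further than the paper in spelling out why $\eta(\theta')=\eta$ (via the all-downward path and linear independence of the $T_{ij}$) and in verifying minimality of the sufficient statistic through a triangular system; the paper leaves both of these implicit.
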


\begin{proof}
The desired parametrization is equivalent to
\begin{equation}\label{eq:p-eta}
\ptheta(x)=N_1(\theta)\prod_{i=1}^k\prod_{j=1}^{\kappa_i-1}\left(\theta_{ij}\frac{N_{ij}(\theta)}{N_i(\theta)}\right)^{\alpha_{ij}(x)}\quad \text{for all $x$}.
\end{equation}

To prove this equality, we first rewrite the probability mass function introduced in \cref{sec:ST}. For any inner vertex $i=1,\ldots,k$ in the probability tree, the label of the $\kappa_i$th outgoing edge is a function of the first ${1,\ldots,\kappa_i-1}$ edges, namely $\theta_{i\kappa_i}=1-\sum_{j=1}^{\kappa_i-1}\theta_{ij}$. The indicator $\alpha_{i\kappa_i}$ of passing along that edge is a function of the indicators of those same edges and equals $\alpha_i(x)(1-\sum_{j=1}^{\kappa_i-1}\alpha_{ij}(x))$ where $\alpha_i(x)$ is one if $x$ reaches vertex $v_i$ and zero otherwise.

Thus, the probability mass function of a probability tree can be written as
\begin{equation}
\ptheta(x)=\prod_{i=1}^k\prod_{j=1}^{\kappa_i}\theta_{ij}^{\alpha_{ij}(x)}
=\prod_{i=1}^k\prod_{j=1}^{\kappa_i-1}\theta_{ij}^{\alpha_{ij}(x)}\biggl(1-\sum_{j=1}^{\kappa_i-1}\theta_{ij}\biggr)^{\alpha_i(x)(1-\sum_{j=1}^{\kappa_i-1}\alpha_{ij}(x))}.
\end{equation}
And as a consequence, the claim reduces to:
\begin{equation}\label{eq:claim}
\prod_{i=1}^k\biggl(1-\sum_{j=1}^{\kappa_i-1}\theta_{ij}\biggr)^{\alpha_i(x)(1-\sum_{j=1}^{\kappa_i-1}\alpha_{ij}(x))}
=N_1(\theta)\prod_{i=1}^k\prod_{j=1}^{\kappa_i-1}\left(\frac{N_{ij}(\theta)}{N_i(\theta)}\right)^{\alpha_{ij}(x)}
\end{equation}
for all root-to-leaf paths $x$. We now prove this claim by induction on the number of inner vertices $k$.

Let thus in a first step the graph be a star, $k=1$. If $x$ is the root-to-leaf path which is one of the first $1,\ldots,\kappa_1-1$ edges coming out of the root then \cref{eq:claim} states that $1=N_1(\theta)\cdot\nicefrac{1}{N_1(\theta)}=1$. If otherwise $x$ equals the $\kappa_1$st edge then $1-\sum_{j=1}^{\kappa_1-1}\theta_{1j}=N_1(\theta)$ which is also true.

If $k>1$ then $x$ may have length greater one and we distinguish two analogous cases. Hereby the induction hypotheses holds for trees smaller than the one we consider and so, in particular, the claim is true for its subtrees. Without loss, we number the vertex at the tail of the first edge of $x$ as $v_2$. For simplicity, $A$ denotes the left hand side of \cref{eq:claim} and $B$ the right hand side of that equation. Then:

Case $1$: The edge $(v_1,v_2)$ is one of the first $1,\ldots,\kappa_1-1$ edges coming out of the root. Then
\begin{equation}
\begin{split}
A&=1\cdot\prod_{i=2}^k\biggl(1-\sum_{j=1}^{\kappa_i-1}\theta_{ij}\biggr)^{\alpha_i(x)(1-\sum_{j=1}^{\kappa_i-1}\alpha_{ij}(x))}\\
&=N_2(\theta)\prod_{i=2}^k\prod_{j=1}^{\kappa_i-1}\left(\frac{N_{ij}(\theta)}{N_i(\theta)}\right)^{\alpha_{ij}(x)}\\
&=N_1(\theta)\frac{N_2(\theta)}{N_1(\theta)}\prod_{i=2}^k\prod_{j=1}^{\kappa_i-1}\left(\frac{N_{ij}(\theta)}{N_i(\theta)}\right)^{\alpha_{ij}(x)}=B
\end{split}
\end{equation}
where the final step is true because $N_2(\theta)=N_{12}(\theta)$.

Case $2$: The edge $(v_1,v_2)$ is the $\kappa_1$st edge coming out of the root. Then
\begin{equation}
\begin{split}
A&=\biggl(1-\sum_{j=1}^{\kappa_1-1}\theta_{1j}\biggr)\prod_{i=2}^k\biggl(1-\sum_{j=1}^{\kappa_i-1}\theta_{ij}\biggr)^{\alpha_i(x)(1-\sum_{j=1}^{\kappa_i-1}\alpha_{ij}(x))}\\
&=\biggl(1-\sum_{j=1}^{\kappa_1-1}\theta_{1j}\biggr)N_2(\theta)\prod_{i=2}^k\prod_{j=1}^{\kappa_i-1}\left(\frac{N_{ij}(\theta)}{N_i(\theta)}\right)^{\alpha_{ij}(x)}\\
&=N_1(\theta)\prod_{i=2}^k\prod_{j=1}^{\kappa_i-1}\left(\frac{N_{ij}(\theta)}{N_i(\theta)}\right)^{\alpha_{ij}(x)}=B
\end{split}
\end{equation}
where the final step is true because $\prod_{j=1}^{\kappa_1-1}\left(\nicefrac{N_{1j}(\theta)}{N_1(\theta)}\right)^{\alpha_{1j}(x)}=1$. 

This proves \cref{eq:p-eta}, and every probability tree represents an exponential family in this parametrization.

The regularity claim follows since the map $\eta$ is a diffeomorphism between the space of model parameters $\Theta$ and $\R^d$. Indeed, it has a smooth inverse obtained by first computing $p_{\eta}$ for a given $\eta\in\R^d$ according to the given parametrization and then computing $\theta\in\Theta$ such that $p_{\eta}=\ptheta$ as described in the proof of \cref{lem:saturated}.
\end{proof}

\Cref{prop:probtrees} enables us to simply read the parametrisation of the underlying exponential family directly from a given probability tree. 
%Because we usually number vertices from left to right and from top to bottom, we can think of the sufficient statistic as a vector of indicators of edges pointing not downwards, and of the $N_i$ as a product of all edge labels along the strictly downwards pointing subpath going out of vertex $v_i$ and stretching to a leaf, for $i=1,\ldots,k$. 
%
%\chris{Is there any \emph{meaning} to the ratio $\theta_{ij}\nicefrac{N_{ij}(\theta)}{N_i(\theta)}$ in terms of conditional probabilities / the statistical model?}

\begin{example}\label{exp:exp}
Consider \cref{fig:staged&CEG} and the probability tree which has the same graph as the staged trees in \cref{fig:colliderstaged} and~\ref{fig:staged}. For simplicity, in this binary tree we do not use double indices but label the upwards edges going out of vertex $v_i$ as $\theta_i$ and the downward edges as $1-\theta_i$ for $i=1,\ldots,7$. 

The indicator functions $\alpha_{i1}$ of the upwards edges $i=1,\ldots,7$ are a sufficient statistic for this tree. The corresponding natural parameters are then log-ratios of the downwards labels derived as
\begin{align}
\eta_1(\theta)&=\log(\theta_1\nicefrac{(1-\theta_2)(1-\theta_5)}{(1-\theta_1)(1-\theta_3)(1-\theta_7)}),\\\eta_2(\theta)&=\log(\theta_2\nicefrac{(1-\theta_4)}{(1-\theta_2)(1-\theta_5)}),\\ \eta_3(\theta)&=\log(\theta_3\nicefrac{(1-\theta_6)}{(1-\theta_3)(1-\theta_7)}),
\end{align}
and $\eta_i(\theta)=\log(\nicefrac{\theta_i}{(1-\theta_i}))$ for $i=4,\ldots,7$. The cumulant-generating function is the logarithm of the product of the labels along the downward root-to-leaf path coming out of the root vertex $\psi(\theta)=-\log\left[(1-\theta_1)(1-\theta_3)(1-\theta_7)\right]$.
\end{example}

\subsection{Curved exponential families}
\label{sub:curved}

As stated in \cref{sub:stagedtrees}, every discrete Bayesian network has a corresponding staged tree representation. Since for instance the collider graph does not represent a regular exponential family \citep{Koller.Friedman.2009}, staged trees cannot in general be regular exponential families either. They rather form what is called a \emph{curved} exponential family: a submodel of a regular exponential family whose parameter space is a smooth manifold \citep{Efron.1978}.

\begin{theorem}\label{thm:CEGcurved}
Staged tree models are curved exponential families.
\end{theorem}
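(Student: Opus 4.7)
The plan is to realize every staged tree model as a submodel of the regular exponential family produced in \cref{prop:probtrees} and to verify that the induced parametrization is a smooth embedding. By the definition recalled in \cref{sub:curved}, this will immediately identify the model as a curved exponential family.

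To set this up, I would fix the staged tree $\T$ and let its equivalence relation partition the inner vertices into stages $s = 1, \ldots, k'$, each with common outdegree $\kappa_s$ (vertices in the same stage necessarily share the number of outgoing edges). The staged-tree parameter space is then $\Xi_{\T} = \times_{s=1}^{k'} \Delta_{\kappa_s - 1}^\circ$, an open subset of $\R^{d'}$ of dimension $d' = \sum_{s=1}^{k'}(\kappa_s - 1) \le d$. The canonical identification $\iota \colon \Xi_{\T} \to \Theta_{\T}$ that duplicates each stage's parameter at every vertex in that stage is a linear injection, and its image is precisely the set of probability-tree parameters respecting the staging; by construction the staged tree model is $\{p_{\iota(\xi)} \mid \xi \in \Xi_{\T}\}$.

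Next, I would compose $\iota$ with the natural-parameter map $\eta$ from \cref{prop:probtrees} to obtain $\phi = \eta \circ \iota \colon \Xi_{\T} \to \R^{d}$. Since $\eta$ is a diffeomorphism onto $\R^{d}$ and $\iota$ is a linear embedding with Jacobian of constant rank $d'$, the composite $\phi$ is a smooth embedding whose image is a $d'$-dimensional smooth submanifold of the natural parameter space of the underlying probability tree. Pulling back the exponential form \cref{eq:expfam} along $\phi$ then writes every staged tree distribution in exponential form with natural parameters restricted to this smooth submanifold, which is exactly what \cref{sub:curved} requires of a curved exponential family.

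The main obstacle I expect is the careful bookkeeping translating the staging equivalence relation into a clean linear identification $\iota$; once this is in place the smoothness and injectivity of both $\iota$ and $\eta$ combine essentially trivially, so no hard analytic estimate is needed. No regularity should be hoped for in general, because $\phi(\Xi_{\T})$ is typically a proper, strictly lower-dimensional curved submanifold of $\R^{d}$ cut out by the log-linear relations imposed by the stagings; this is precisely the reason the resulting family is curved rather than regular, and it paves the way for a separate graphical characterization of when the submanifold is actually an affine subspace.
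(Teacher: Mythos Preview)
Your proposal is correct and follows essentially the same route as the paper: both realize the staged tree model as a submodel of the regular exponential family of \cref{prop:probtrees}, identify the constrained $\theta$-parameter space as a linear subspace, and push it forward through the diffeomorphism $\eta$ to obtain a smooth submanifold of the natural parameter space. The only cosmetic difference is that the paper describes this linear subspace as the kernel of a surjective linear map $h_{\T}$ encoding the stage identifications, whereas you describe it as the image of the injective duplication map $\iota$; these are dual presentations of the same object and the remaining argument is identical.
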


\begin{proof}
By \cref{prop:probtrees}, every staged tree model is a submodel of a regular exponential family. We prove that the natural parameter space is always a smooth manifold of the right dimension by showing that it is the image of a certain linear subspace of $\Theta$ under the diffeomorphism $\eta.$

Let $\T_0$ denote a probability tree with parameter space $\Theta_0=\times_{i=1}^{k}\Delta_{\kappa_i-1}^\circ$. Let $\T$ denote the same tree graph together with an imposed stage structure and parameter space $\Theta_{\T}=\times_{r\in R}\Delta_{\kappa_r-1}^\circ$ for some index set $R\subseteq\{1,\ldots,k\}$.
The parameter space of the staged tree model is the  kernel of the parameter space of the saturated model under the linear function $h_{\T}:\R^{d_0}\to\R^{d_0-d}$ which encodes the ${d_0-d}$ identifications of edge labels as
\begin{equation}\label{eq:linearh}
h_{\T}(\theta)=\bigl(\theta_{ij}-\theta_{st}~\mid~\text{for all }j=t=1,\ldots,\kappa_j\text{ and all }v_i \text{ and } v_s \text{ in the same stage}\bigr)
\end{equation}
where $d=\sum_{r\in R}(\kappa_r-1)$ is the number of free parameters in the staged tree, and $d_0=\sum_{i=1}^k(\kappa_i-1)$ is the number of free parameters in the probability tree. By construction, $h_{\T}(\theta)=0$ if and only if $\theta$ fulfills the stage constraints in $\T$. That is, the parameter space of the staged tree model equals the kernel of the map \cref{eq:linearh}, so $h_{\T}^{-1}(0)=\Theta_{\T}$.

Since $h_{\T}$ is surjective, its kernel is $d$-dimensional. Thus, the parameter space of the staged tree model is a $d$-dimensional linear space in $\R^{d_0}$.
As a consequence, the natural parameter space, obtained as the image of the diffeomorphism $\eta$ given in \cref{prop:probtrees}, is a smooth manifold of the same dimension. The claim follows.
\end{proof}

As an alternative proof strategy for \cref{thm:CEGcurved} one may use the characterization of staged tree models as the solution set of a collection of polynomial equations and inequalities provided by \citet{Duarte.Goergen.2019}. Whenever these polynomials do not exhibit any algebraic singularities inside the probability simplex, the model is a curved exponential family: compare the discussion of implicit model representations given in \citet{Geiger.etal.2001} and of algebraic exponential families in \citet{Drton.Sullivant.2009}.

In particular, whilst the equations coding stage constraints in the conditional probabilities as in \cref{eq:linearh} are always linear, those coding the same constraints formulated in terms of the natural parameters in general are not.
In the following proposition we describe these constraints and explain when they are linear.
% In this latter case, the parameter space of the exponential family is a linear space.

%Having linear constraints in terms of the natural parameters makes the corresponding exponential family simpler to describe, since then the natural parameter space is a linear space instead of a more complicated manifold.
%We can however state the following graphical results.

\begin{proposition}\label{prop:stages} Let $\T$ be a staged tree. For every vertex $v_i$ and outgoing edge indexed by $j$, let $\eta_{ij}$ denote the natural parameter of the exponential form derived in \cref{prop:probtrees}, where we additionally define $\eta_{i\kappa_i} = 0$. Let further $\xi_{ij} = \exp(\eta_{ij})$ and
$P_{ij} = \sum_{r}\prod_{ab}\xi_{ab}^{\alpha_{ab}(r)}$ where $r$ ranges over the $v_{ij}$-to-leaf paths for every fixed double index $ij$.

Then two vertices $v_i$ and $v_s$ are in the same stage if and only if they have the same number $\kappa$ of emanating edges and for all $j = 1,\dotsc,\kappa$ the equality
\begin{equation}\label{eq:stages}
\eta_{ij} + \log(P_{ij}) + \log(P_{s\kappa}) = \eta_{sj} + \log (P_{sj}) + \log (P_{i\kappa})
\end{equation}
is true.

Furthermore, let $N_{ij}$ denote the product of the labels along the downwards-pointing $v_{ij}$-to-leaf path as in \cref{prop:probtrees}. Then \cref{eq:stages} is a linear expression in the $\eta$ if and only if $P_{ij}P_{s\kappa}=P_{sj}P_{i\kappa}$, or equivalently
\begin{equation}\label{eq:regularcondition}
N_{ij}N_{s\kappa} = N_{sj}N_{i\kappa}.
\end{equation}
\end{proposition}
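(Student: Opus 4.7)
The approach is to first establish a closed-form expression for $P_{ij}$ in terms of the $N$-functions from \cref{prop:probtrees}, and then translate both claims into identities in the original parameters $\theta$.

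\textbf{Step 1: the identity $P_{ij}=1/N_{ij}$.} Using $\xi_{ab}=\exp(\eta_{ab})=\theta_{ab}N_{ab}/N_a$ from \cref{prop:probtrees}, I would fix a $v_{ij}$-to-leaf path $r$ through vertices $v_{ij}=v_{c_0}\to v_{c_1}\to\cdots\to v_{c_m}$ and observe that the product $\prod_{ab}\xi_{ab}^{\alpha_{ab}(r)}$ separates into the conditional probability of $r$ (from $v_{ij}$) times a telescoping factor $\prod_\ell N_{c_{\ell+1}}/N_{c_\ell}$ that collapses to $1/N_{ij}$, since $N=1$ at a leaf. Summing over all $v_{ij}$-to-leaf paths and using that these conditional probabilities sum to one yields $P_{ij}=1/N_{ij}$. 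This immediately gives the stated equivalence $P_{ij}P_{s\kappa}=P_{sj}P_{i\kappa}\Longleftrightarrow N_{ij}N_{s\kappa}=N_{sj}N_{i\kappa}$.

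\textbf{Step 2: the stage characterisation.} Substituting $\log P_{ij}=-\log N_{ij}$ into \cref{eq:stages} and using the identity $\eta_{ij}-\log N_{ij}=\log(\theta_{ij}/N_i)$, the equation rearranges to $\theta_{ij}N_{i\kappa}N_s=\theta_{sj}N_iN_{s\kappa}$. The recursion $N_i=\theta_{i\kappa_i}N_{i\kappa_i}$ (valid under the hypothesis $\kappa_i=\kappa_s=\kappa$) then collapses this to the quadratic identity $\theta_{ij}\theta_{s\kappa}=\theta_{sj}\theta_{i\kappa}$. Sameness of stage, i.e.\ $\theta_{ij}=\theta_{sj}$ for all $j$, trivially implies this identity for all $j$. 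Conversely, assuming the identity holds for all $j=1,\dotsc,\kappa$, summing over $j$ and using $\sum_j\theta_{ij}=\sum_j\theta_{sj}=1$ yields $\theta_{s\kappa}=\theta_{i\kappa}$, and dividing each individual identity by this common value forces $\theta_{ij}=\theta_{sj}$.

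\textbf{Step 3: the linearity criterion.} Rewriting \cref{eq:stages} as
\begin{equation*}
\eta_{ij}-\eta_{sj}=\log\!\bigl(P_{sj}P_{i\kappa}/(P_{ij}P_{s\kappa})\bigr),
\end{equation*}
sufficiency is immediate: if $P_{ij}P_{s\kappa}=P_{sj}P_{i\kappa}$ then the right-hand side vanishes and the equation reduces to the affine-linear constraint $\eta_{ij}=\eta_{sj}$. For necessity, I would expand $P_{sj}P_{i\kappa}$ and $P_{ij}P_{s\kappa}$ as polynomials in $\xi=e^\eta$ whose monomials correspond to pairs of paths in the subtrees rooted at the four vertices in question; for the log of their ratio to be affine-linear in $\eta$, the ratio must be a Laurent monomial in $\xi$, and matching the positive-coefficient sum-over-paths expansions should force the ratio to equal exactly $1$.

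The main obstacle I anticipate is this last matching step: it requires a careful combinatorial argument showing that, among the specific polynomials arising from subtree-path enumeration, the only way two such products can differ by a monomial factor is for them to coincide identically. Everything else reduces cleanly once the telescoping identity $P_{ij}=1/N_{ij}$ is in hand.
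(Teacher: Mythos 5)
Your Steps 1 and 2 follow essentially the same route as the paper: the telescoping identity $P_{ij}=1/N_{ij}$ is exactly the paper's observation that $N_{ij}P_{ij}$ equals the sum of the atomic probabilities of the subtree rooted at $v_{ij}$, and your reduction of \cref{eq:stages} to $\theta_{ij}\theta_{s\kappa}=\theta_{sj}\theta_{i\kappa}$ followed by summing over $j$ is a clean (in fact slightly more explicit) version of the paper's chain of equivalences for the stage characterisation. These parts are correct.

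The genuine gap is the one you flag yourself in Step 3: the necessity direction of the linearity criterion. Two things are missing. First, "affine-linear in $\eta$" only gives $P_{ij}P_{s\kappa}/(P_{sj}P_{i\kappa})=c\prod_{ab}\xi_{ab}^{\beta_{ab}}$ with \emph{real} exponents $\beta_{ab}$; to conclude the ratio is a Laurent monomial you must first force the $\beta_{ab}$ to be integers, which the paper does by noting that the left-hand side is a rational function of the $\xi$ while a non-integer power could be made to evaluate irrationally. Second, and more importantly, the "matching step" you leave open is not a routine verification — it is the heart of the argument. The paper closes it as follows: if the monomial factor were nontrivial, some single variable $\xi_{cd}$ would have to divide one of the polynomials $P_{ij},P_{s\kappa},P_{sj},P_{i\kappa}$ (each $\xi_{cd}$ being irreducible, and the variables appearing in the putative monomial being distinct from those cancelled on the other side). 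Since $P_{ij}$ is a sum over \emph{all} $v_{ij}$-to-leaf paths with positive (unit) coefficients, $\xi_{cd}$ divides $P_{ij}$ only if every such path uses the edge $cd$; but every inner vertex of a staged tree has at least two outgoing edges, so one can always branch away from any prescribed edge and exhibit a path avoiding it. Hence both exponent sets are empty and the ratio is identically $1$. Without this divisibility-plus-branching argument the "only if" direction is not proved, so as written your proposal establishes only the sufficiency half of the last claim.
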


\begin{proof}
%We prove that the proposition holds for a more general type of staged tree, namely \emph{labeled} staged trees. These are probability trees without the sum-to-one condition in each floret. So, the downwards-pointing edge $\theta_{i\kappa_i}$ is now an independent variable. Still, we use the expressions defined in this paper and write $\xi_{ij} = \theta_{ij}N_{ij}/N_i$, where $N_i$ resp.\ $N_{ij}$ is the product of the labels on the downward-pointing $v_i$-to-leaf, resp.\ $v_{ij}$-to-leaf path. We also set $\eta_{ij} = \exp(\xi_{ij})$, which still implies $\eta_{i\kappa_i} = 0$.
By definition, $\xi_{ij} = \theta_{ij}N_{ij}/N_i$. For any $i$ and edge $j$ emanating from $v_i$, the equalities
\begin{equation}
N_{ij}P_{ij} = \sum_{r}N_{ij}\prod_{ab}\xi_{ab}^{\alpha_{ab}(r)} = \sum_{r}\prod_{ab}\theta_{ab}^{\alpha_{ab}(r)} = 1
\end{equation}
are true, where $r$ ranges over the $v_{ij}$-to-root paths. In particular, the second equality is obtained by applying \cref{eq:p-eta} to the subtree rooted at $v_{ij}$. The third equality holds because its left-hand side is the sum of all atomic probabilities associated to the subtree rooted at $v_{ij}$. Now let $v_i$ and $v_s$ be in the same stage. Equivalently, $\theta_{ij} = \theta_{sj}$ for all $j=1,\dotsc,\kappa$. This is equivalent to $\xi_{ij}N_i/N_{ij} = \xi_{sj}N_s/N_{sj}$ which in turn is equivalent to $\xi_{ij}N_{i\kappa}/N_{ij} = \xi_{sj}N_{s\kappa}/N_{sj}$ simply because $\theta_{i\kappa} = \theta_{s\kappa}$. Using $N_{ij}P_{ij} = 1$, rearranging, and taking logarithms yields the statement in~\cref{eq:stages}.

As for the second claim, if \cref{eq:regularcondition} holds then the log terms in \cref{eq:stages} vanish, making it a linear expression in the $\eta$. Conversely, let \cref{eq:stages} be a linear expression in the $\eta$. Then the expression
\begin{equation}\label{eq:notlinear}
\log(P_{ij}P_{s\kappa}/P_{sj}P_{i\kappa})
\end{equation}
is also linear in the $\eta$. This implies
$
P_{ij}P_{s\kappa}/P_{sj}P_{i\kappa} = c\prod_{ab}\xi^{\beta_{ab}}
$
for some ${c, \beta_{ab} \in \mathbb R}$. Since the left-hand side of this equality is a rational function in the $\xi$, all $\beta_{ab}$ must be integers. Indeed, otherwise there would exist a choice of $\xi$ that would make the right-hand side evaluate to an irrational number whilst the left-hand side can only evaluate to rational numbers. So, we obtain an equation of the form
\begin{equation}
P_{ij}P_{s\kappa}\prod_{ab\in A} \xi_{ab}^{\beta_{ab}} = P_{sj}P_{i\kappa}\prod_{cd\in B} \xi_{cd}^{\beta_{cd}}
\end{equation}
for some sets $A, B$ of indices and $\beta_{ab},\beta_{cd}\in \mathbb N$. % of positive integers \color{red} vectors? Maybe $\ldots\prod_{a\in A} \xi^{a}=\ldots\prod_{b\in B} \xi^{b}$ instead? \color{black}.

Suppose now that one of $A$ and $B$ is nonempty, without loss of generality assume it to be $B$. Then some $\xi_{cd}$ divides $P_{ij}$. By the definition of $P_{ij}$, the only way this is possible is if $\xi_{cd}$ divides all summands of $P_{ij}$. That is, if all paths of the subtree rooted at $v_{ij}$ (relabeled with $\theta\mapsto\xi$) have the label $\xi_{cd}$ in common. But this is impossible because in a staged tree we assume that each node has at least two children, and thus we can always find a path that avoids the label $\xi_{cd}$.
Hence we see that both $A$ and $B$ must be empty, so $P_{ij}P_{s\kappa}=P_{sj}P_{i\kappa}$. And hence $N_{ij}N_{s\kappa} = N_{sj}N_{i\kappa}$ using $N_{ij}P_{ij} = 1$.
\end{proof}

Let $\mathcal T$ be a staged tree. \cref{prop:stages} shows that the exponential family of $\mathcal T$ proposed in this paper is a regular exponential family if and only if the equality of ratios in \cref{eq:regularcondition} is true for every pair of vertices $v_i$ and $v_s$ in the same stage and every edge $j$ emanating from these vertices. For this reason, staged trees satisfying this property are of particular interest. We henceforth call these \emph{regular staged trees}.

Equation \cref{eq:regularcondition} gives a simple criterion for regularity by comparing certain concatenations of downwards-pointing paths. Algebraically, it can be checked by comparing the monomials $\prod_{ab} \theta_{ab}$ obtained by multiplying all labels in the four downward-pointing paths starting from $v_{ij}, v_{i\kappa}, v_{sj}, v_{s\kappa},$ respectively. Graphically, it can be verified by checking that the concatenation of the first and fourth path on the above list is the same as the concatenation of the second and third, up to a permutation of the edges.

Next, we explore two prominent classes of regular staged trees. The notion of a balanced staged tree, first formalized in \cite{Duarte.Ananiadi.2020}, is an important notion for studying the toric geometry of staged trees \citep{Goergen.etal.2021}. It makes use of the interpolating polynomials $t_{ij}$ defined by $t_{ij} = \sum_{r}\prod_{ab}\theta_{ab}^{\alpha_{ab}(r)},$ where $r$ ranges over the $v_{ij}$-to-leaf paths. This sum is to be read as a formal sum of labels, i.e.\ without using local sum-to-one conditions. (Including these conditions would imply $t_{ij} = 1$.)
A staged tree is \emph{balanced} if for all $v_{i}$ and $v_s$ in the same stage and all edges $j$ emanating from $v_i$ we have $t_{ij}t_{s\kappa}=t_{i\kappa}t_{sj}$, where again $\kappa$ denotes the chosen downwards-pointing edge emanating from $v_i$. This definition for balanced staged trees differs slightly from the one found in the literature but is nevertheless equivalent to it. For a graphical interpretation of this condition, note that $t_{ij}t_{s\kappa}$ is the interpolating polynomial of the staged tree obtained by attaching a copy of the subtree rooted at $v_{s\kappa}$ to each of the leaves of the subtree rooted at $v_{ij}$. The condition for a balanced tree now is equivalent to this composite tree being statistically equivalent to the composite tree corresponding to the product $t_{i\kappa}t_{sj}$.

Simple staged trees are defined in \cite{Collazo.etal.2018}. A staged tree is \emph{simple} if for all $v_{i}$ and $v_s$ in the same stage and all edges $j$ emanating from $v_i$ we have $t_{ij}=t_{sj}$.

\begin{proposition}\label{prop:balancedregular}
All balanced staged trees are regular. In particular, all simple staged trees are regular.
\end{proposition}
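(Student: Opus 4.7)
The plan is to deduce the regularity criterion $N_{ij}N_{s\kappa} = N_{i\kappa}N_{sj}$ from the balanced identity $t_{ij}t_{s\kappa} = t_{i\kappa}t_{sj}$ by specialising the tree labels so that each interpolating polynomial collapses to a single monomial. Specifically, I would set every non-downwards label $\theta_{lm}$ (that is, with $m<\kappa_l$) to zero while leaving the downwards labels $\theta_{l\kappa_l}$ as free indeterminates. Any $v_{ij}$-to-leaf path that deviates from the downwards direction at some vertex then picks up a zero factor, so the only surviving summand of $t_{ij}$ is the one indexed by the fully-downwards path, whose monomial is by definition $N_{ij}$. Thus under this specialisation $t_{ij}$ becomes $N_{ij}$ for every choice of $ij$.

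Applied to both sides of the balanced identity, this specialisation turns $t_{ij}t_{s\kappa} = t_{i\kappa}t_{sj}$ into the monomial equation $N_{ij}N_{s\kappa} = N_{i\kappa}N_{sj}$, which is precisely the regularity condition~\cref{eq:regularcondition} appearing in~\cref{prop:stages}. This establishes the main claim. For the \emph{in particular} clause, I observe that simple staged trees are balanced: if $v_i$ and $v_s$ share a stage, then $t_{ij} = t_{sj}$ for every $j$, and the same equality with $j=\kappa$ gives $t_{i\kappa}=t_{s\kappa}$; multiplying these two identities yields the balanced condition, so the simple case reduces to the first part.

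The one point that requires care is checking that the proposed substitution is consistent with the stage identifications, since the $\theta_{lm}$ are really labels of stages rather than of individual vertex-edges. This is where I expect the main (mild) bookkeeping to occur: same-stage vertices share the number $\kappa$ of emanating edges and have their labels matched edge-by-edge in the index $j$, so setting every label of a non-terminal index to zero and leaving the downwards index free is well-defined globally. No algebraic obstruction beyond this check appears, and the polynomial identity of balancedness then passes cleanly to a monomial identity by substitution.
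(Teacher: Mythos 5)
Your proof is correct and is essentially the paper's argument in different clothing: specialising every non-downwards label to zero is exactly the paper's extraction of the constant term of $t_{ij}t_{s\kappa}-t_{i\kappa}t_{sj}$ viewed as a polynomial in the non-downwards labels (the paper splits $t_{ij}=t'_{ij}+N_{ij}$ and observes every other summand is divisible by some non-downwards label), and both routes legitimately use that balancedness is a formal polynomial identity, so it survives the substitution. Your explicit check that simple implies balanced, and your remark that the substitution is well defined on stage-shared labels, merely spell out steps the paper leaves implicit.
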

\begin{proof}
By definition, all simple trees are balanced. Now let $\mathcal T$ be a balanced staged tree and $v_i$, $v_s$ vertices in the same stage. Let $j$ be an edge emanating from $v_{i}$. Splitting off the unique downwards-pointing path from $v_{ij}$, write $t_{ij} = t'_{ij} + N_{ij}$, and likewise for the other pairs of indices.
Then
\begin{align}
0
&= t_{ij}t_{s\kappa}-t_{i\kappa}t_{sj}
\\ & = (t'_{ij}t'_{s\kappa} - t'_{sj}t'_{i\kappa} + t'_{ij}N_{s\kappa} - t'_{sj}N_{i\kappa} + t'_{s\kappa}N_{ij} - t'_{i\kappa}N_{sj}) + (N_{ij}N_{s\kappa} - N_{sj}N_{i\kappa})
\\ & =: R + Q
\end{align}
where we define $R$ resp.\ $Q$ to be the left resp.\ right outer summand of the middle expression.
Consider this expression as a polynomial $P$ in the $\theta_{ab}$ labels. Split this set of labels into two sets $A$ and $B$ of labels pointing downwards, resp.\ not pointing downwards. Thus $P$ can be viewed as a polynomial in $\mathbb R[A][B]$. Now, all summands of $P$ in $R$ are divisible by some label in the set $B$. This means that $Q\in\mathbb R[A]$ is the constant term of the polynomial $P\in \mathbb R[A][B]$. Thus $P = 0$ implies $Q = 0$.
\end{proof}

The following converse to \cref{prop:balancedregular} holds for all binary trees with three levels.

\begin{conjecture}
All regular staged trees are balanced.
\end{conjecture}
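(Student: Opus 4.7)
The natural starting point is the decomposition used in the proof of \cref{prop:balancedregular}: for a same-stage pair $v_i, v_s$ with distinguished downward edge index $\kappa$ and any other outgoing edge index $j$, one has
\begin{equation}
t_{ij}t_{s\kappa}-t_{i\kappa}t_{sj}=R+Q,
\end{equation}
where $Q=N_{ij}N_{s\kappa}-N_{sj}N_{i\kappa}$ is the part involving only downward-pointing labels and $R$ is the sum of the remaining cross terms, each divisible by at least one non-downward label. The regularity hypothesis gives $Q=0$ on $\Theta_\T$, so the conjecture is equivalent to the assertion that $R$ also vanishes identically on $\Theta_\T$.

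The plan is to attempt an induction on the maximal height of the four subtrees rooted at $v_{ij}, v_{i\kappa}, v_{sj}, v_{s\kappa}$. The base case is immediate: when all four are leaves, $t$ and $N$ equal $1$ and $R$ vanishes identically. For the inductive step I would expand each interpolating polynomial via the one-step recurrence that writes the interpolating polynomial of a vertex as a labelled sum of the interpolating polynomials of its children, collect the cross terms in $R$, and try to rewrite each group as a combination of balance-type differences at descendant pairs, which vanish by the inductive hypothesis, and of regularity-type monomial differences at descendant pairs, which vanish because $\T$ is globally regular by assumption.

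The main obstacle, and the reason the conjecture remains open, is that the descendant pairs appearing in this expansion are generally not same-stage pairs of $\T$: one typically has to compare a child of $v_{ij}$ with a child of $v_{s\kappa}$, or a child of $v_{i\kappa}$ with a child of $v_{sj}$, and these belong to distinct subtrees that carry no a priori stage relation. A naive induction therefore stalls, and genuine new input is needed. Two natural routes are, first, to strengthen the inductive statement so as to track any pair of vertices for which the downward monomial identity already holds on $\Theta_\T$, and second, to exploit \cref{prop:stages} to recast regularity as linearity of the stage equations in $\eta$ and combine this with the algebraic description of staged tree models in \citet{Duarte.Goergen.2019} and the toric perspective of \citet{Goergen.etal.2021}, in which balance acquires a clean interpretation through the interpolating polynomial. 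For the binary three-level case referenced above the induction collapses into a small case analysis: only the middle-level same-stage pairs carry a non-trivial regularity equation, and that single equation can hold identically on $\Theta_\T$ only after one of a small list of admissible identifications among the four leaf-adjacent vertices, each of which makes the associated balance equation a polynomial tautology. Lifting this finite case analysis to a structural theorem for trees of arbitrary shape is the heart of the conjecture.
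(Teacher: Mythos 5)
This statement is a conjecture in the paper: no proof is given, only the remark that it can be verified for binary trees with three levels and the informal observation that \cref{eq:regularcondition} can be applied recursively downstream of $v_i$ and $v_s$. Your proposal honestly does not claim to close the argument, and the obstruction you name --- that after one level of expansion the relevant vertex pairs (a child of $v_{ij}$ against a child of $v_{s\kappa}$, and so on) need not be in the same stage, so neither the inductive hypothesis nor \cref{eq:regularcondition} applies to them --- is indeed the genuine sticking point; your suggested remedies are essentially the directions the paper itself hints at. So there is nothing in the paper to compare your argument against, and as a proof it is incomplete by construction.

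There is, however, one concrete error in your setup that you should fix before pursuing this further. You reduce the conjecture to the assertion that $R$ ``vanishes identically on $\Theta_\T$.'' That assertion is automatically true and carries no information: on $\Theta_\T$ the local sum-to-one conditions hold, so every interpolating polynomial evaluates to $1$ there and $R+Q=t_{ij}t_{s\kappa}-t_{i\kappa}t_{sj}$ vanishes on $\Theta_\T$ for any staged tree whatsoever; combined with $Q=0$ this gives $R=0$ on $\Theta_\T$ for free, yet says nothing about balancedness. The paper is explicit that balancedness is a \emph{formal} polynomial identity in the stage-identified labels, read ``without using local sum-to-one conditions,'' and likewise \cref{eq:regularcondition} is an equality of formal monomials. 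The correct target is therefore $R=0$ as an identity in the polynomial ring $\mathbb{R}[\theta]$ after stage identifications, and all the inductive bookkeeping must be carried out at that formal level, as in the $\mathbb{R}[A][B]$ grading argument in the proof of \cref{prop:balancedregular}. Once restated this way, the difficulty you identify remains exactly as you describe it, and the conjecture is open.
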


As an indication for this conjecture, while \cref{eq:regularcondition} seems much weaker than the balanced condition for two nodes $v_i$ and $v_s$, in a regular tree 
%it holds for any other two nodes of the same stage, and also implies the existence of further nodes of the same stage downstream of $v_i$ and $v_s$. Hence,
it can be applied recursively everywhere downstream of $v_i$ and $v_s$, greatly limiting the possible staging structure.

\begin{example}\label{exp:exp2}
\Cref{fig:colliderstaged} shows a staged tree representation of the collider Bayesian network ${X\to Z\leftarrow Y}$ which is not a regular exponential family. We can see here that indeed in our parametrization, the linear stage identifications $\theta_2=\theta_3$ do not give rise to linear constraints on the natural parameters $\eta_2$ and $\eta_3$. Instead, by \cref{prop:stages},
\begin{equation}
\eta_2+\log(\xi_4+1)+\log(\xi_7+1)=\eta_3+\log(\xi_5+1)+\log(\xi_6+1)
\end{equation}
where $\xi_l=\exp(\eta_l)$ for $l=1,\ldots,7$.
This implies the following equation
\begin{equation}
\exp(\eta_2)(1+\exp(\eta_4))(1+\exp(\eta_7))=\exp(\eta_3)(1+\exp(\eta_5))(1+\exp(\eta_6))
\end{equation}
which fully characterises the corresponding curved exponential family in the natural parameters.

The staged tree in \cref{fig:staged} however fulfills the linearity criteria in \cref{prop:stages}. In particular, the stage constraints $\theta_2=\theta_3$, $\theta_4=\theta_6$, and $\theta_5=\theta_7$ together imply that $(1-\theta_4)(1-\theta_7)=(1-\theta_6)(1-\theta_5)$ as in \cref{eq:regularcondition}. They thus give rise to linear (equality) constraints $\eta_2=\eta_3$ on the natural parameters. 
The same simplification would occur were $v_4$ and $v_5$, and $v_6$ and $v_7$ in the same stage, respectively, rather than $v_4$ and $v_6$, and $v_5$ and~$v_7$. These stagings all give rise to balanced and regular trees.
\end{example}

Equations on the natural parameters in a curved exponential family can be highly non-trivial. In this paper we found that for a staged tree they are functions of subgraphs, derived from the conditional independence relations coloured in the tree. Thus, the equations on its exponential family parametrization can be directly read from the graph.
This formulation has only been possible thanks to the probability tree's expressiveness of the underlying space of events and of its parametrization. 

Analogous results have to the best of the authors' knowledge not been derived in the literature of Bayesian networks. While sufficient statistics for these models are known, natural parameters have not been explicitly computed \citep[e.g.][]{Loh.Wainwright.2013}. 
In order to translate our formulae into the language of Bayesian networks, first a given directed acyclic graph needs to be transformed into the corresponding staged tree using the algorithm provided by \citet{Varando2021}, and then the staged-tree language can be used to infer a natural parametrization as in \cref{prop:probtrees}. Bayesian networks themselves provide a too-compact representation of the underlying modelling assumptions to be able to directly express their exponential form as a function of the graph. However, \cref{prop:balancedregular} yields the following insight.

\begin{corollary}
Let $G$ be a Bayesian network. The exponential family obtained by applying Proposition 1 to the associated staged tree $\T_G$ is regular if G is decomposable or if there exists a topological ordering $(1,\dotsc,n_{G})$ of the nodes of $G$ such that  the parents of the $(i+1)$th node are a subset of the union of the $i$th node and its parents for all $i$.
\end{corollary}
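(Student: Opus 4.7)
The plan is to invoke Proposition~\ref{prop:balancedregular}, so it suffices to show that $\T_G$ is simple (hence balanced, hence regular) under either hypothesis. Recall from Section~\ref{sub:stagedtrees} that in the Varando et al.\ construction of $\T_G$, the depth-$i$ vertex corresponding to an assignment $y_{[i]}$ has outgoing edges labelled by $P(Y_{i+1}=y_{i+1}\mid y_{\mathrm{pa}(i+1)})$, and two depth-$i$ vertices lie in the same stage if and only if they agree on $y_{\mathrm{pa}(i+1)}$.

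I would first treat the ordering-condition case. Under the hypothesis $\mathrm{pa}(i+1)\subseteq\{i\}\cup\mathrm{pa}(i)$, the stage of the depth-$(i+1)$ vertex reached from a depth-$i$ vertex in stage $\sigma$ via edge $y_{i+1}$ is entirely determined by the pair $(\sigma,y_{i+1})$: the parent configuration $y_{\mathrm{pa}(i+1)}$ can be read off from $\sigma$ (which records $y_{\mathrm{pa}(i)}$) together with the value $y_i$ contained in $\sigma$ and the choice of $y_{i+1}$. A downward induction on depth, starting from the leaves, then yields that the labelled subtree rooted at any vertex depends only on its stage. Hence $t_{ij}=t_{sj}$ for any $v_i,v_s$ in the same stage and any common outgoing edge index $j$, so $\T_G$ is simple.

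For the decomposable case, I would reduce to the previous case by producing a topological ordering of $G$ satisfying $\mathrm{pa}(i+1)\subseteq\{i\}\cup\mathrm{pa}(i)$. Since $G$ is decomposable, its moralization is chordal, and so admits a perfect elimination ordering whose reverse is a topological ordering of $G$ in which the parents of each node form a clique of the moralization. Using the running-intersection property of any clique tree of the moralization, I would traverse the tree in depth-first order and, within each clique, emit the separator vertices inherited from its parent clique before the remaining vertices. This way consecutive vertices $i$ and $i+1$ always lie in a common clique, which forces $\mathrm{pa}(i+1)\subseteq\{i\}\cup\mathrm{pa}(i)$ and reduces the problem to the first case.

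The main obstacle is exactly this combinatorial construction for the decomposable case: one must check carefully that the proposed traversal is simultaneously a topological ordering of $G$ and satisfies the parent-inclusion condition at every consecutive pair, and that the separator-first emission within each clique is well-defined and consistent across the clique tree.
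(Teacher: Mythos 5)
Your treatment of the ordering-condition case is sound and essentially parallels the paper's route: the paper invokes the result of Leonelli and Varando that $G$ satisfies this ordering condition (their notion of a \emph{simple} Bayesian network) if and only if $\T_G$ is a simple staged tree, and then applies Proposition~\ref{prop:balancedregular}; your downward induction is a direct proof of the same implication and the details you sketch do go through.

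The decomposable case, however, has a genuine gap: the reduction you propose is not merely delicate, it is impossible, because a decomposable $G$ need not admit \emph{any} topological ordering with $\mathrm{pa}(i+1)\subseteq\{i\}\cup\mathrm{pa}(i)$. Take $G$ with edges $1\to2\to3$ and $1\to4\to5$. Every node has at most one parent, so $G$ has no immoralities and is decomposable. Yet an exhaustive check of the six topological orderings shows each one fails the condition; for instance your clique-tree traversal produces $1,2,3,4,5$, where the consecutive pair $(3,4)$ violates it since $\mathrm{pa}(4)=\{1\}\not\subseteq\{3\}\cup\mathrm{pa}(3)=\{2,3\}$. The specific false step is the claim that consecutive vertices in the emitted order always share a clique: when the depth-first traversal backtracks past a branch point, the last vertex of one subtree and the first new vertex of the next are in general not adjacent. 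The two hypotheses in the corollary are genuinely incomparable sufficient conditions, so the decomposable case needs its own argument. The paper handles it via balancedness rather than simplicity: by the result of Duarte and Solus, $\T_G$ is balanced if and only if $G$ is decomposable, and balanced trees are regular by Proposition~\ref{prop:balancedregular}. To repair your proof you would need to show directly that decomposability of $G$ forces the balancing identities $t_{ij}t_{s\kappa}=t_{i\kappa}t_{sj}$, not the stronger simplicity identities $t_{ij}=t_{sj}$.
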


\begin{proof}

The first statement is well known but also a direct consequence of the fact that $\T_G$ is balanced if and only if $G$ is decomposable \citep{Duarte.Solus.2021}. The second statement corresponds to the definition of $G$ being \emph{simple} given in \citet{Leonelli.Varando.2021}. These authors prove that $G$ is simple if and only if $\T_G$ is simple. By \cref{prop:balancedregular}, simple staged trees form regular exponential families.
\end{proof}

%\color{blue}
%Since decomposable undirected graphs can also be represented as staged trees, the above results apply to these models as well. In order to broaden our focus to other graphical models, a future venture into bidirected graphical models \citep{Maathuis.etal.2019} would be interesting.
%\color{black}

\section*{Acknowledgements}
We are grateful to Giovanni Pistone and to Piotr Zwiernik for discussions during the early stages of this project and to Eva Riccomagno for comments on an earlier version of this paper. We also thank the two anonymous referees for their comments which led to a significant improvent of our results.

Christiane G\"orgen and Manuele Leonelli were supported by the programme \enquote{Oberwolfach Leibniz Fellows} of the Mathematisches Forschungsinstitut Oberwolfach in 2017. Orlando Marigliano was supported by International Max Planck Research School and Brummer \& Partners MathDataLab.

%%%%%%%%%%%%%%%%%%%%%%%%%%%%%%%%%%%%%%%%%%%%%%
%% Supplementary Material, if any, should   %%
%% be provided in {supplement} environment  %%
%% with title and short description.        %%
%%%%%%%%%%%%%%%%%%%%%%%%%%%%%%%%%%%%%%%%%%%%%%
%\begin{supplement}
%\stitle{???}
%\sdescription{???.}
%\end{supplement}

%% if your bibliography is in bibtex format, uncomment commands:
\bibliographystyle{imsart-nameyear} % Style BST file (imsart-number.bst or imsart-nameyear.bst)
\bibliography{ourbibitems}       % Bibliography file (usually '*.bib')

%% or include bibliography directly:
% \begin{thebibliography}{}
% \bibitem{b1}
% \end{thebibliography}

\end{document}